\newcommand{\Rmnum}[1]{\expandafter\@slowromancap\romannumeral #1@}
\newtheorem{lemma}{Lemma}[section]
\newtheorem{lem/def}[lemma]{Lemma/Definition}
\newtheorem{proposition}[lemma]{Proposition}
\newtheorem{theorem}[lemma]{Theorem}
\newtheorem{corollary}[lemma]{Corollary}
\theoremstyle{definition}
\newtheorem{example}[lemma]{Example}
\newtheorem{definition}[lemma]{Definition}
\newtheorem{redefinition}[lemma]{Redefinition}
\newtheorem{remark}[lemma]{Remark}
\newcommand{\beq}{\begin{equation*}}
\newcommand{\eeq}{\end{equation*}}
\newcommand{\ba}{\begin{array}}
\newcommand{\ea}{\end{array}}
\newcommand{\Hom}{\mathrm{Hom}}
\newcommand{\RHom}{\mathrm{RHom}}
\newcommand{\Ext}{\mathrm{Ext}}
\newcommand{\End}{\mathrm{End}}
\newcommand{\gr}{\mathrm{gr}}
\newcommand{\ox}{\otimes}
\title[Hochschild cohomology via twisting cochains]{The cup product on Hochschild cohomology via twisting cochains and applications to Koszul rings}
\date{}
\author{Cris Negron}
\address{Department of Mathematics\\Louisiana State University\\
Baton Rouge, LA 70803, USA}
\email{cnegron@lsu.edu}
\thanks{This material is based upon work supported by the National Science Foundation Graduate Research Fellowship under Grant No. DGE-1256082}
\begin{document}
\maketitle

\begin{abstract}
Given an acyclic twisting cochain $\pi:C\to A$, from a curved dg coalgebra $C$ to a dg algebra $A$, we show that the associated twisted hom complex $\Hom^\pi_k(C,A)$ has cohomology equal to the Hochschild cohomology of $A$, as a graded ring.  As a corollary we find that the Hochschild cohomology of a Koszul algebra $A$, along with its cup product, is a subquotient of the tensor product algebra $A^!\ox A$ of $A$ with its Koszul dual.
\end{abstract}

\section{Introduction}

Let $k$ be a field of arbitrary characteristic.  The Hochschild cohomology of an dg ($k$-)algebra $A$ can be defined as the graded group of bimodule extensions
\[
HH^\bullet(A)=\Ext_{A\text{-}\mathrm{bimod}}^\bullet(A,A).
\]
This cohomology was popularized in the many works of Gerstenhaber (see e.g. \cite{gerstenhaber63,gerstenhaber64}).  Hochschild cohomology is fundamental in that it is the cohomology associated to a dg algebra $A$ which ``controls the formal deformation theory of $A$", in the general sense of deformation theory via dg Lie algebras \cite{manetti09,KSdeformation} (see also the closing comments of Section~\ref{rHH}).
\par

As with any group of self extensions, Hochschild cohomology carries a natural Yoneda product, which is often referred to as the cup product.  Hochschild cohomology, along with the cup product, has been shown to have very strong relations to loop space (co)homology in topological settings~\cite{menichi01,felix04}, and is known to be a derived invariant~\cite{keller03}.
\par

In this paper we give a general result relating twisting cochains to the cup product on Hochschild cohomology, then discuss its specific appearance for Koszul algebras.

\begin{theorem}[\ref{thm1}]\label{thm:-2}
Let $A$ be any dg algebra, $C$ be any (curved) dg coalgebra, and $\pi:C\to A$ be an {\it acyclic twisting cochain}.  Let $\Hom^\pi_k(C,A)$ denote the associated {\it twisted hom complex}.  There is a dg algebra quasi-isomorphism
\[
\Hom_k^\pi(C,A)\overset{\sim}\to \RHom_{A\text{-}\mathrm{bimod}}(A,A)
\]
and subsequent identification of graded rings $H^\bullet(\Hom_k^\pi(C,A))=HH^\bullet(A)$.
\end{theorem}

In the text $\RHom_{A\text{-}\mathrm{bimod}}(A,A)$ will be represented by the endomorphism dg algebra of a particular semi-projective approximation of $A$.  A twisting cochain is a degree $1$ $k$-linear map $\pi$ from $C$ to $A$ which satisfies the Maurer-Cartan equation, and the corresponding twisted hom complex is the standard set of graded homs
\[
\Hom^\pi_k(C,A)=\oplus_i\left\{\ba{c}\text{homogeneous degree $i$ $k$-linear maps}\\ f:C\to A\ea\right\}
\]
with the convolution product and altered differential
\[
d_{\Hom^\pi}(f)=d_A f-(-1)^{|f|}fd_C-[\pi,f].
\]
This gives $\Hom^\pi_k(C,A)$ the structure of a dg algebra.  
\par

Twisting cochains have primarily been of use in topological and geometric studies \cite{brown59,chen77,gugenheim82}, although they have proved fundamental in recent algebraic inquiries into $A_\infty$-algebras initiated by Lef\'{e}vre-Hasegawa's thesis \cite{L} and the subsequent works of Keller \cite{K,K1,K2,K3}.  In the topological/geometric settings the object $A$ is usually a dg algebra with nonvanishing differential (e.g. the singular chains of a topological group), while in the ring theoretic context $A$ will usually be concentrated in degree $0$.
\par

Although the cup product on Hochschild cohomology is of independent interest (see e.g. the support variety theory of~\cite{SS04} and the applications to loop space (co)homology~\cite{menichi01,felix04}), we are also interested in it for its value as a derived invariant and for the assisting role it plays in computations of the graded Lie structure on Hochschild cohomology (see e.g.~\cite{grimleynguyenwitherspoon}).\footnote{One can see~\cite{gerstenhaber64,KSdeformation} for more information on this Lie structure.}  We also believe that the relationship established in Theorem~\ref{thm:-2} is somewhat fundamental.  Indeed, we are suggesting that dg coalgebras with acyclic twisting cochains can act as replacements for the (generally) larger structures of the bar construction, bar resolution, and Hochschild cochain complex.  One can see~\cite{negronwitherspoon15} for another application of this philosophy.

As noted above, Theorem~\ref{thm:-2} has a specific life for Koszul algebras.  While we'll avoid giving a formal definition of Koszul algebras here, some important examples include quantum polynomial rings, Sklyanin algebras, Universal enveloping algebras, Clifford algebras, Steenrod algebras, and Weyl algebras (see~\cite{Pr}).  For a {\it graded} Koszul algebra, for example, we will have

\begin{theorem}[\ref{thm}]\label{thm:-1}
Let $A=k\langle x_1,\dots, x_n\rangle/(R)$ be a Koszul algebra (with $R$ the $k$-space of homogeneous degree $2$ relations) and $A^!=k\langle\lambda_1,\dots,\lambda_n\rangle/(R^\perp)$ be its Koszul dual.  Let $e$ be the degree $1$ element $e=\sum_i \lambda_i\ox x_i$ in the tensor product algebra $A^!\ox A$.  Then
\begin{enumerate}
\item the graded commutator operation $[e,-]:A^!\ox A\to A^!\ox A$ is a square zero degree $1$ derivation on the algebra $A^!\ox A$, where we grade by the degree on $A^!$,
\item the cohomology of the resulting dg algebra admits an identification of graded rings $H^\bullet(A^!\ox A)=HH^\bullet(A)$.
\end{enumerate}
\end{theorem}

In the statement of the above theorem the $\lambda_i$ are dual to $x_i$ so that $k\{\lambda_1,\dots,\lambda_n\}=(k\{x_1,\dots,x_n\})^\ast$.  When $A$ is filtered (not graded) we need to replace $A^!$ with a certain curved dg algebra which plays the analogous role.
\par

We should mention here the work of Buchweitz, Green, Snashall, and Solberg \cite{BS}.  In the paper \cite{BS} the authors relate the multiplication on the Hochschild cohomology of a Koszul (path) algebra $A$ to a comultiplicative structure on a special collection of right ideals in $A$.  One manner in which our results differ is that the methods presented here are essentially basis free--the element $e$ does not depend on choice of basis--while those of \cite{BS} seem to depend heavily on choices of bases and a concrete analysis of a certain coalgebra related to the Koszul ring.
\par

The work presented here also appears, in an alternate form with an additional computation of the Hochschild cohomology ring of the universal enveloping algebra of the Heisenberg Lie algebra, in the first chapter of my dissertation~\cite{negronthesis}.

\subsection{Organization}

Section~\ref{dg} is dedicated to establishing the necessary background information, and in Section~\ref{Tw} we define twisting cochains and some related constructions.  Our presentation of the twisted tensor product seems to be novel, and allows for a clear proof of Theorem~\ref{thm:-2}.  In Section~\ref{cplXez} we give a fundamental relationship between twisted tensor products and twisted homs.  Sections~\ref{cupprod} and~\ref{rHH} are dedicated to the presentation of the main theorem and its proof, and Sections~\ref{sec:koszul} and \ref{kos} are dedicated to an analysis of Koszul algebras.

\subsection*{Acknowledgments}

Thanks to thank Xingting Wang, Guangbin Zhuang, and James Zhang for their support and assistance in this research.  Thanks also to James Zhang for pushing me to think more deeply about the basics of twisting cochains. 

\section*{Notations and conventions}
\label{nots}
 
 Let $R$ be an arbitrary ring.  By a ``$R$-module" we mean a \emph{left} $R$-module unless stated otherwise.  We will always use the cohomological indexing convention
\[
X=\cdots\overset{d}\to X^{n-1}\overset{d}\to X^n\overset{d}\to X^{n+1}\overset{d}\to \cdots,
\]
and do not distinguish between chain complexes and cochain complexes.  Given $R$-complexes $X$ and $Y$ we write $\Hom_R(X,Y)$ for the standard Hom complex
\[
\Hom_R(X,Y)=\bigoplus_{n\in\mathbb{Z}}(\prod_i\Hom_R(X^i, Y^{i+n}))
\]
For any homogenous function $\theta\in\Hom_R(X,Y)$ of degree $n$, the differential $d$ is given by the formula $d(\theta)=d_Y\theta-(-1)^{n}\theta d_X$.  For a $k$-complex $X$ we let $X^\ast$ denote the chain dual
\[
X^\ast=\Hom_k(X,k)=\cdots\to (X^{i+1})^\ast\to (X^i)^\ast\to (X^{i-1})^\ast\to \cdots
\]
\par

Sweedler's notation will be used to denote the comultiplication on a coalgebra $C$.  So the element $\Delta(c)$ will be written $\Delta(c)=c_1\ox c_2$, with the sum implicit.  To say this more clearly, ``$c_1\ox c_2$'' is simply shorthand for some expression of the element
\[
\Delta(c)=\sum_i c_{i_1}\ox c_{i_2}
\]
in the tensor product $C\ox C$.  Higher iterations of the comultiplication will be denoted using similar notation.  For example, the element 
\[
(\Delta\ox id)\Delta(c)=(id\ox\Delta)\Delta(c)
\]
will be denoted $c_1\ox c_2\ox c_3$.  Again, there is an implicit sum.  If $C$ is graded, and $c\in C$ is homogeneous, then the $c_1, c_2,$ etc. will always be taken to be homogeneous. 
 
\section{Preliminaries on (curved) dg algebras and (curved) dg coalgebras}
\label{dg}

\subsection{Dg algebras and coalgebras}

Recall that a dg algebra is a chain complex $(A, d)$ equipped with a unit $k\to A$ and associative multiplication $\mu:A\ox A\to A$ which are both chain maps.  A dg coalgebra is defined dually to be a complex $(C, d)$ with a coalgebra structure such that each structure map $C\to k$, $\Delta:C\to C\ox C$, is a chain map.  We will call a dg (co)algebra locally finite if it is finite dimensional in each homological degree.  A dg algebra $A$ (resp. dg coalgebra $C$) is said to be augmented (resp. coaugmented) if it comes equipped with a dg map $A\overset{\epsilon}\to k$ (resp. $k\overset{u}\to C$).
\par
Given an arbitrary dg algebra $A$ and dg coalgebra $C$ the hom complex $\Hom_k(C,A)$ becomes a dg algebra under the convolution product
\[
f\ast g:=\mu_A(f\ox g)\Delta_C:c\mapsto (-1)^{|c_1||g|}f(c_1)g(c_2).
\]
In particular the dual $C^\ast=\Hom_k(C,k)$ is a dg algebra.  One can check that the dual $A^\ast=\Hom_k(A,k)$ of any locally finite dg algebra, which is additionally bounded above or below, is a dg coalgebra under the coproduct $\Delta(\gamma)=\gamma\mu$.  The double dual of a locally finite dg (co)algebra $A$, which is bounded above or below, is naturally isomorphic to $A$ via the standard map
\begin{equation}\label{evelmap}
\ba{c}
ev:A\to (A^\ast)^\ast\\
a\mapsto (\phi\mapsto (-1)^{|a||\phi|}\phi(a)).
\ea
\end{equation}

The tensor product of dg (co)algebras is again a dg (co)algebra under the differential $d_{A\ox A'}=d_{A}\ox id_{A'}+id_{A}\ox d_{A'}$, and we can define the opposite dg algebra $A^{op}$ to be the complex $A$ with the opposite multiplication $a\cdot^{op}b:=(-1)^{|a||b|}ba$.
\par

Given a dg algebra $A$, a $k$-complex $M$ is called a left (resp. right) dg module over $A$ if it is a graded $A$-module, after we forget the differential, and the action map
\[
A\ox M\to M\ \ (\text{resp.}\ M\ox A\to M)
\]
is a map of chain complexes.  Similarly, $M$ is a dg bimodule if it is a graded bimodule over $A$ and the action map $A\ox M\ox A\to M$ is one of chain complexes.  A bimodule over a dg algebra $A$ can, as in the non-dg case, also be seen as a module over the enveloping algebra $A^e=A\ox A^{op}$.  
\par

Given dg $A$-modules $M$ and $N$, we define the hom complex $\Hom_{A}(M,N)$ in the usual way.  That is
\[
\Hom_{A}(M,N)=(\oplus_i \Hom_{A}^i(M,N), d_{\Hom})
\]
with the usual differential $d_{\Hom}:f\mapsto d_N f-(-1)^{|f|}f d_M$.  Here the notation $\Hom_{A}^i(M,N)$ denotes the set of homogenous degree $i$ maps $f:M\to N$ satisfying
\[
f(am)=(-1)^{|f||a|}a f(m)
\]
for any homogeneous $a,b\in A$.  Taking $A=B^e$ for a dg algebra $B$ gives the appropriate definition of the hom complex for bimodules $\Hom_{B\text{-bimod}}(M,N)=\Hom_{B^e}(M,N)$.

\subsection{Curved dg algebras and coalgebras}
\label{sec:curved_stuffs}

We introduce the notion of a curved dg (co)algebra, following \cite{P}.  These structures will be needed in our account of Koszul duality for nonaugmented algebras, such as Weyl algebras and Clifford algebras.

\begin{definition}[Curved dg (co)algebras]
A curved dg algebra is a graded algebra $A=\oplus_{i\in\mathbb{Z}}A^i$ along with a degree $1$ graded derivation $d_A$, and a degree $2$ element $c_A\in A^2$, so that
\[
d_A^2=[c_A,-]\ \ \mathrm{and}\ \ d_A(c_A)=0.
\]
(Here $d_A^2$ is the square $d_Ad_A$.)  Dually, a curved dg coalgebra is a graded coalgebra $C=\oplus_{i\in\mathbb{Z}}C^i$ along with a degree $1$ coderivation $d_C$, and degree $2$ function $f_C:C\to k$ satisfying
\[
d_C^2=(f_C\ox id-id\ox f_C)\Delta\ \ \mathrm{and}\ \ f_Cd_C=0.
\]
We may denote a curved dg algebra (resp. coalgebra) as a triple $(A,d_A,c_A)$ (resp. $(C,d_C,f_C)$).
\end{definition}

As with dg algebras and coalgebras, we have some standard constructions.  Given a curved dg algebra $A$ and a curved dg coalgebra $C$ the set of graded maps $\Hom_k(C,A)=\bigoplus_n\big(\prod_i\Hom_k(C^i,A^{i+n})\big)$ becomes a curved dg algebra under the convolution product, standard derivation $d(\xi)=d_A\xi-(-1)^{|\xi|}\xi d_C$, and curvature
\[
c_{\Hom}=c_A\epsilon_C-1_Af_C
\]
\cite[Section 6.2]{positselski11}.  In particular, the graded dual $C^\ast$ of any curved dg coalgebra becomes a curved dg algebra with curvature element $c_{C^\ast}=-f_C$.  The graded dual of any locally finite curved dg algebra $A$ becomes a curved dg coalgebra with the obvious coproduct, derivation $d(\eta)=-(-1)^{|\eta|}\eta d_A$, and curvature function $f_{A^\ast}=-ev_{c_A}$.  When $C$ is locally finite, the evaluation map $ev:C\to {C^\ast}^\ast$ provides an isomorphism of curved dg coalgebras between $C$ and its double dual.  Finally, the tensor product $A\ox A'$ of curved dg algebras will again be a curved dg algebra with $d_{A\ox A'}=d_A\ox id_{A'}+id_A\ox d_{A'}$ and $c_{A\ox A'}=c_A\ox 1+1\ox c_{A'}$.
\par

Theoretically, curved dg structures arise as deformations of dg algebras.  For example, a cocycle in the second Hochschild cohomology of a dg algebra $A$ will correspond to a curved dg $k[t]/(t^2)$-algebra, or more generally curved $A_\infty$ $k[t]/(t^2)$-algebra, which reduces to $A$ at $t=0$.

\section{Twisting cochains}
\label{Tw}

We first give the definition in the non-curved setting, then address the curved situation independently.  The following definition is standard and can be found, for example, in \cite{gugenheim_etal90} or \cite{LV}.

\begin{definition}[Twisting cochain]
\label{twc}
Let $C$ be a dg coalgebra and $A$ be a dg algebra.  A degree $1$ linear map $\pi: C\to A$ is called a twisting cochain if $\pi$ satisfies the equation 
\begin{equation}\label{MC}
-(d_A \pi+\pi d_C)+\mu (\pi\ox \pi)\Delta=0.
\end{equation}
\end{definition}

In other sources, the formula in (\ref{MC}) may appears as 
\[
d_A \pi+\pi d_C+\mu (\pi\ox \pi)\Delta=0.
\]
One can mediate between the two perspectives by replacing $\pi$ with $-\pi$.  Assuming $k$ is of characteristic $\neq 2$, this alternate form of (\ref{MC}) is exactly the statement that $\pi$ is a solution to the Maurer-Cartan equation
\[
d(\pi)+\frac{1}{2}[\pi,\pi]=0,
\]
where $[,]$ denotes the graded commutator on the dg algebra $\Hom_k(C,A)$.

\begin{example}\label{exampleLie}
Let $\mathfrak{g}$ be a Lie algebra and $k[\Sigma \mathfrak{g}]$ be the free {\it graded} commutative algebra generated by the degree $-1$ $k$-space $\Sigma \mathfrak{g}$.  Given $x\in \mathfrak{g}$ we let $X$ denote the corresponding element in $\Sigma \mathfrak{g}$ with shifted degree.  We extend the operation 
\[
\Delta:\Sigma \mathfrak{g}\to k[\Sigma \mathfrak{g}]\ox k[\Sigma \mathfrak{g}],\ \ X\mapsto 1\ox X+X\ox 1
\]
(uniquely) to an algebra map from $\Delta:k[\Sigma \mathfrak{g}]\to k[\Sigma \mathfrak{g}]\ox k[\Sigma \mathfrak{g}]$, which gives $k[\Sigma \mathfrak{g}]$ a graded bialgebra structure.
\par

The linear map $k[\Sigma \mathfrak{g}]^{-2}\to k[\Sigma \mathfrak{g}]^{-1}=\Sigma \mathfrak{g}$ which takes a monomial $XY$ to the Lie bracket $-[x,y]$, with shifted degree, extends to a unique {\it co}derivation $d_\mathfrak{g}$ on $k[\Sigma \mathfrak{g}]$.  We then get a canonical twisting cochain $\pi: k[\Sigma \mathfrak{g}]\to U(\mathfrak{g})$ given in degree $-1$ by $X\mapsto x$.  Indeed, on degree $-2$ elements we have
\[
\ba{rl}
\Delta(XY)=\Delta(X)\Delta(Y)&=XY\ox 1+X\ox Y+(-1)^{|X||Y|}Y\ox X+1\ox XY\\
&=XY\ox 1+X\ox Y-Y\ox X+1\ox XY
\ea
\]
and so
\[
\ba{rl}
\pi\ast\pi (XY)&=\pi(XY)\pi(1)-\pi(X)\pi(Y)+\pi(Y)\pi(X)+\pi(1)\pi(XY)\\
&=-[x,y]=\pi d_\mathfrak{g}(XY).
\ea
\]
This is the twisting cochain condition.  This twisting cochain is an example of the canonical twisting cochain of Lemma \ref{lemdef}.
\end{example}

\begin{definition}[Twisted homs]
Given a twisting cochain $\pi:C\to A$, we define the dg algebra of twisted homs $\Hom_k^\pi(C,A)$ as the space of graded homs $\Hom_k^\pi(C,A)=\oplus_i \Hom_k^i(C,A)$ along with convolution product $\ast$ and differential 
\[
\ba{rl}
d_{\Hom^\pi_k(C,A)}(f)&:=d_Af-(-1)^{|f|}fd_C-(\pi\ast f-(-1)^{|f|}f\ast \pi)\\
&=d_{\Hom_k(C,A)}(f)-[\pi,f].
\ea
\]
\end{definition}

One can easily verify that $\Hom_k^\pi(C,A)$ is a in fact a dg algebra, or simply see \cite[Proposition 2.1.6]{LV}.  We also have the analogous definition of the twisted homs $\Hom_k^\pi(C,M)$, where $M$ is a dg $A$-bimodule.

\begin{lem/def}\label{460}
Suppose $\pi:C\to A$ is a twisting cochain.  There is a functor
\[
(-)^\pi:\mathrm{dg\ \Hom_k(C,A)\text{-}bimod}\to \mathrm{dg\ \Hom_k^{\pi}(C,A)\text{-}bimod}.
\]
This functor takes a dg bimodule $(M,d_M)$ to the bimodule $(M^\pi,d^\pi_M)$ which is $M$ as a graded space, has the $\Hom^\pi_k(C,A)$-action induced by the algebra identification $\Hom_k(C,A)=\Hom^\pi_k(C,A)$, and differential $d^\pi_M:=d_M-[\pi,-]$.  For any $\varphi:M\to N$ we simply take $\varphi^\pi:=\varphi$.
\end{lem/def}

\begin{proof}
Take $d_H=d_{\Hom_k(C,A)}$ and $d^\pi=d_{\Hom_k^\pi(C,A)}$.  For any $m\in M$ we have
\begin{equation}\label{470}
\ba{rl}
(d^\pi_M)^2(m)&=d_M^2(m)-d_M([\pi,m])-[\pi,d_M(m)]+[\pi,[\pi,m]]\\
&=-[d_H(\pi),m]+[\pi,d_M(m)]-[\pi,d_M(m)]+[\pi,[\pi,m]]\\
&=-[d_H(\pi),m]+[\pi,[\pi,m]].
\ea
\end{equation}
But
\[
[\pi,[\pi,m]]=\pi^2m-m\pi^2-(-1)^{|m|}\pi m \pi-(-1)^{|m|+1}\pi m \pi=[\pi^2,m]
\]
and $\pi^2=d_H(\pi)$ by the twisting cochain condition.  So the final equation of (\ref{470}) vanishes and we get that $d_M^\pi$ is in fact a differential on $M$.
\par

We need to check now that the equation $d_M^\pi(fm)=d^\pi(f)m+(-1)^{|f|}fd_M^\pi(m)$ for $f\in \Hom^\pi_k(C,A)$ and $m\in M$.  But this is clear since $d_M^\pi=d_M+[\pi,-]$ and $d_M$ and $[\pi,-]$ satisfy the equations
\[
d_M(fm)=d_H(f)m+(-1)^{|f|} f d_M(m)\ \ \text{and}\ \ [\pi,fm]=[\pi,f]m+(-1)^{|f|}f[\pi,m].
\]
One similarly verifies the equation for $d_M^\pi(mf)$.  Finally, one sees that for any $\varphi:M\to N$, $\varphi^\pi=\varphi$ is still a dg map since for any $m\in M$
\[
\varphi(d_M^\pi(m))=\varphi(d_M(m))+\varphi([\pi,m])=d_M(\varphi(m))+[\pi,\varphi(m)]=d_M^\pi(\varphi(m)).
\]
\end{proof}

\begin{lemma}\label{494}
For any dg coalgebra $C$ and dg algebra $A$, the tensor complex $A\ox C\ox A$ is a dg $\Hom(C,A)$-bimodule under the left and right actions
\[
f\cdot (a\ox c\ox b):=(-1)^{|f|(|a|+|c_1|)}a\ox c_1\ox f(c_2)b
\]
and
\[
(a\ox c\ox b)\cdot f:=(-1)^{|f|(|c|+|b|)}af(c_1)\ox c_2\ox b,
\]
for $a,b\in A$, $c\in C$.
\end{lemma}

\begin{proof}
The verification of this fact is a sequence of tedious but straightforward calculations.  We only check compatibility with the differential under the left action.  Take $f$ and $a\ox c\ox b$ as above.  Then we have
\[
\ba{l}
d(f\cdot(a\ox c\ox b))\\
=\pm d(a)\ox c_1\ox f(c_2)b\pm a\ox d(c_1)\ox f(c_2)b\pm a\ox c_1\ox d(f(c_2))b\\
\hspace{4mm}\pm a\ox c_1\ox f(c_2)d(b)\\
=\pm d(a)\ox c_1\ox f(c_2)b\pm a\ox d(c_1)\ox f(c_2)b\pm a\ox c_1\ox f(c_2)d(b)\\
\hspace{5mm}\pm a\ox c_1\ox d_{\Hom}(f)(c_2)b+(-1)^{(|f|+1)(|a|+|c_1|)+|f|} a\ox c_1\ox f(d(c_2))b
\ea
\]
where all the signs $\pm$ are the appropriate Koszul signs, and all the $d$ must be given the appropriate subscript.  Using the fact that $d_C$ is a coderivation, this final equation can then be rewritten
\[
(-1)^{|f|}f\cdot d_{A\ox C\ox A}(a\ox c\ox b)+d_{\Hom}(f)\cdot (a\ox c\ox b).
\]
The verification of the formula
\[
\ba{c}
d_{A\ox C\ox A}((a\ox c\ox b)\cdot f)\\
=d_{A\ox C\ox A}(a\ox c\ox b)\cdot f+(-1)^{|f|(|a|+|c|+|b|)}(a\ox c\ox b)\cdot d_{\Hom}(f)
\ea
\]
is similar.
\end{proof}

To gain some understanding of these actions we may consider the two extreme cases, when $C=k$ and when $A=k$.  When $C=k$ the actions reduce to the inner actions of $\Hom_k(k,A)=A$ on $A\ox A$.  When $A=k$, the actions of the dual $\Hom_k(C,k)=C^\ast$ on $C$ are the standard left and right actions corresponding to the respective right and left {\it co}actions of $C$ on itself \cite{M}.  When $C$ and $A$ have nonvanishing (co)augmentation (co)ideals, we are integrating the inner action of $A$ with the standard actions of $C^\ast$ to produce a natural action of the convolution algebra $\Hom_k(C,A)$ on the tensor complex.

\begin{definition}[Twisted tensor products]
Given a twisting cochain $\pi:C\to A$, we define the twisted tensor product $A\ox_\pi C\ox_\pi A$ as the value of the functor $(-)^{\pi}$ on the bimodule $A\ox C\ox A$ (with right and left actions as in described in Lemma \ref{494}).  In other words,
\[
A\ox_\pi C\ox_\pi A=(A\ox C\ox A)^\pi=(A\ox C\ox A, d_{A\ox C\ox A}-[\pi,-]).
\]
\end{definition}

The fact that the twisted tensor product, according to Lemma \ref{460}, is a dg bimodule over the twisted hom complex $\Hom^\pi_k(C,A)$ will be an essential point in our proof of the main result Theorem \ref{thm1}.
\par

On elements, the differential on the twisted tensor product will be given by the formula
\[
\ba{l}
d_{A\ox C\ox A}^\pi(a\ox c\ox b)\\
=d_{A\ox C\ox A}(a\ox c\ox b)+(-1)^{|a|+|c_1|}a\ox c_1\ox \pi(c_2)b\\
\hspace{1cm}-(-1)^{|a|+|c|+|b|+|b|+|c|}a\pi(c_1)\ox c_2\ox b\\
=d_{A\ox C\ox A}(a\ox c\ox b)+(-1)^{|a|+|c_1|}a\ox c_1\ox \pi(c_2)b-(-1)^{|a|}a\pi(c_1)\ox c_2\ox b.
\ea
\]
So we can write the differential on the twisted product in the more conventional form
\[
d_{A\ox C\ox A}+\left(\mu(id_A\ox\pi)\ox id_C\ox id_A-id_A\ox id_C\ox\mu(\pi\ox id)\right)\left(id_A\ox\Delta\ox id_A\right)
\]
\cite{husemoller74}.  The twisted tensor product $A\ox_\pi C\ox_\pi A$ will always have a canonical dg $A$-bimodule structure under the outer $A$-actions, and we will view the twisted tensor product as an object in dg $A$-bimod in what follows.
\par

As an example, the twisted tensor product $U(\mathfrak{g})\ox_\pi k[\Sigma\mathfrak{g}]\ox_\pi U(\mathfrak{g})$ associated to the twisting cochain of Example \ref{exampleLie} will recover the standard Koszul resolution of \cite{Pr}.  For the familiar reader, we also note that one recovers the standard bar resolution as the twisted tensor product of a dg algebra $A$ with its bar construction via the universal twisting cochain.

\subsection{Twisting cochains in the curved setting}

Here we follow \cite[Section 6.2]{positselski11}.  The reader may also refer to \cite{nicolas08}.

\begin{definition}[Twisting cochains with curvature]
A degree $1$ linear map $\pi:C\to A$ from a curved dg coalgebra to a curved dg algebra is called a twisting cochain if the equation
\[
(c_A\epsilon_C-1_Af_C)-(d_A\pi+\pi d_C)+\mu(\pi\ox\pi)\Delta=0
\]
holds.
\end{definition}

Here, again, we differ from some other references by a sign.  We will only be interested in the case in which $A$ is a dg algebra.  In this setting we still get a twisted tensor products $A\ox_\pi C\ox_\pi A$.  The differential on this complex is, oddly enough, given by the same formula as in the non-curved setting:
\begin{equation}
\label{d900}
d_{A\ox C\ox A}+\left(\mu(id_A\ox\pi)\ox id_C\ox id_A-id_A\ox id_C\ox\mu(\pi\ox id)\right)\left(id_A\ox\Delta\ox id_A\right).
\end{equation}
\par

As one can see from the above formula, the curvature disappears at this level.  Indeed, for the remainder of the paper we will be able to provide a uniform analysis of the curved and non-curved situations.  We outline below the manner in which one arrives at the above formula for the twisted tensor product in the curved situation.

\begin{definition}[Curved bimodules]
Suppose we have a curved dg algebra $A=(A,d_A,c_A)$. An $A$-bimodule $M$ is called a curved bimodule if $M$ comes equipped with a grading $M=\oplus_i M^i$ and a degree $1$ operation $d_M$ satisfying
\[
d_M(am)=d_A(a)m+(-1)^{|a|}ad_M(m)\ \ \text{and}\ \ d_M(ma)=d_M(m)a+(-1)^{|m|}md_A(a)
\]
and $d_M^2=[c_A,-]$.  A morphism of curved bimodules is a graded $A$-bimodule map $\varphi:M\to N$ which satisfies $d_N\varphi=\varphi d_M$.
\end{definition}

\begin{lem/def}\label{590}
For any twisting cochain $\pi:C\to A$ from a curved dg coalgebra to a curved dg algebra the complex
\[
\Hom^\pi_k(C,A)=(\Hom_k(C,A),d_{\Hom}-[\pi,-])
\]
is a dg algebra.
\end{lem/def}

\begin{proof}
Take $d^\pi=d_{\Hom}-[\pi,-]$.  Since the sum of algebra derivations is again an algebra derivation the operation $d^\pi$ is an algebra derivation.  We need only check that it is square $0$.  For homogeneous $g\in \Hom_k(C,A)$ we get
\[
\ba{rl}
(d^{\pi})^2(g)&=d_{\Hom}^2(g)-d_{\Hom}([\pi,g])-[\pi,d_{\Hom}(g)]+[\pi[\pi,g]]\\
&=[c_A\epsilon,g]-[1_Af_C,g]-[d_{\Hom}(\pi),g]+[\pi,d_{\Hom}(g)]\\
&\hspace{4mm}-[\pi,d_{\Hom}(g)]+[\pi^2,g]\\
&=[c_A\epsilon,g]-[1_Af_C,g]-[d_{\Hom}(\pi),g]+[\pi^2,g]\\
&=[c_A\epsilon-1_Af_C-d_{\Hom}(\pi)+\pi^2,g]\\
&=0,
\ea
\]
since $\pi$ is a twisting cochain and therefore $c_A\epsilon-1_Af_C-d_{\Hom}(\pi)+\pi^2=0$.
\end{proof}

\begin{lem/def}\label{lem613}
Given a twisting cochain $\pi:C\to A$ from a curved dg coalgebra to a curved dg algebra, we get a functor
\[
(-)^{\pi}:\mathrm{curved\ \Hom_k(C,A)\text{-}bimod}\to \mathrm{dg\ \Hom_k^\pi(C,A)\text{-}bimod}.
\]
On objects, for a curved bimodule $M$ we take $M^\pi=M$ as a graded bimodule over $\Hom_k^\pi(C,A)=\Hom_k(C,A)$, and give $M^\pi$ the differential $d_M^\pi:=d_M-[\pi,-]$.  Given $\varphi:M\to N$ we take $\varphi^\pi=\varphi$.
\end{lem/def}

Recall that the natural curvature on the convolution algebra $\Hom_k(C,A)$ is the function $c_A\epsilon-1_Af_C$. 

\begin{proof}
Checking that $d_M^2=0$ is formally similar to the computation given for Lemma \ref{590}.  The remainder of the proof is exactly the same as that of Lemma \ref{460}.
\end{proof}

\begin{lemma}\label{lem627}
For a (non-curved) dg algebra $A$ and curved dg coalgebra $C$, the tensor complex $A\ox C\ox A$ is a curved bimodule over $\Hom_k(C,A)$ under the same actions as in Lemma \ref{494}.
\end{lemma}

\begin{proof}
Save for compatibility with the curvature, this is the same as Lemma \ref{494}.  For compatibility with the curvature we have
\begin{equation}\label{543}
\ba{rl}
d^2(a\ox c\ox b)&=a\ox d_C^2(c)\ox b\\
&=a\ox f_C(c_1)c_2\ox b-a\ox c_1 f_C(c_2)\ox b\\
&=a\ox c_1 \ox (-1_Af_C(c_2)) b-a(-1_Af_C(c_1))\ox c_2\ox b\\
&=[c_{\Hom},a\ox c\ox b].
\ea
\end{equation}
\end{proof}

We then get the twisted tensor product, again, as the value of $(-)^{\pi}$ on this bimodule
\[
A\ox_\pi C\ox_\pi A=(A\ox C\ox A)^\pi=(A\ox C\ox A,d_{A\ox C\ox A}-[\pi,-]).
\]
The twisted tensor product will be viewed as a dg $A$-bimodule under the outer $A$-actions.

\begin{remark}
The proof of Lemma \ref{lem627} breaks if we allow $A$ to be curved.  Specifically, the sequence of equalities (\ref{543}) will not hold.
\end{remark}

\section{Maps from the twisted tensor complex as twisted homs}
\label{cplXez}

For the remainder of the paper by a ``twisting cochain" $\pi:C\to A$ we will mean a twisting cochain from a, possibly curved, dg coalgebra to a dg algebra.  One should recall our definition of the hom complex $\Hom_{A^e}(M,N)$ from Section \ref{dg}, for dg bimodules $M$ and $N$.

\begin{proposition}\label{prop731}
Suppose $\pi:C\to A$ is a twisting cochain, and that $M$ is any dg $A$-bimodule.  Then the restriction map
\[
\mathrm{res}_M:\Hom_{A^e}(A\ox_\pi C\ox_\pi A, M)\to \Hom_k^\pi(C,M)
\]
is an isomorphism of chain complexes.  These restrictions together produce a natural isomorphism $\mathrm{res}:\Hom_{A^e}(A\ox_\pi C\ox_\pi A,-)\to \Hom_k^\pi(C,-)$.
\end{proposition}

\begin{proof}
Take $d$ to be the differential on $\Hom_{A^e}(A\ox_\pi C\ox_\pi A, M)$ and $d'$ to be the differential on $\Hom_k^\pi(C,M)$.  We need to check the formula
\[
d(f)|C=d_{\Hom}(f|C)-(\pi\ast (f|C)-(-1)^{|f|}(f|C)\ast\pi)
\]
for any homogenous $A^e$-linear map $f:A\ox_\pi C\ox_\pi A\to M$.  We proceed directly.  Take $c\in C$.  Then
\[
\ba{l}
d(f)(c)\\
=d_Mf(c)-(-1)^{|f|}(f d_{A\ox C\ox A}(c)+f(\pi(c_1)\ox c_2\ox 1-(-1)^{|c_1|}1\ox c_1\ox \pi(c_2))\\
=d_Mf(c)-(-1)^{|f|}fd_{C}(c)-(-1)^{|f|+|f|(|c_1|+1)}\pi(c_1)f(c_2)+(-1)^{|c_1|+|f|}f(c_1)\pi(c_2)\\
=d_{\Hom_k(C,M)}(f|C)(c)-((\pi\ast f)(c)-(-1)^{|f|}(f\ast\pi)(c))\\
=d_{\Hom_k(C,M)}(f|C)(c)-[\pi, f](c)\\
=d'(f|C)(c).
\ea
\]
It is obvious that restriction is natural in $M$, so the second statement is immediate.
\end{proof}

We will call a pair $(C,M)$, with $C$ a curved dg coalgebra and $A$ a dg algebra, {\it sufficiently finite} if the natural dg algebra embedding
\begin{equation}\label{naturalmapper}
C^\ast\ox A\to \Hom_k(C,A),\ \ f\ox a\mapsto (c\mapsto (-1)^{|c||a|}f(c)a)
\end{equation}
is an isomorphism.  For any dg $A$-bimodule $M$, we say the pair $(C,M)$ is sufficiently finite if $(C,A)$ is sufficiently finite and the map analogous to (\ref{naturalmapper}), with $A$ replaced by $M$, is an isomorphism.  Some easy examples of sufficiently finite pairs $(C,A)$ would be when $C$ is finite dimensional, or when $A$ is finite dimensional, or when $C$ is locally finite and $A$ is bounded above and below, etc.

\begin{definition}[The functor $C^\ast\ox^\pi-$]\label{defsup_pi}
Let $\pi:C\to A$ be a twisting cochain on a sufficiently finite pair $(C,A)$.  Then we define the functor
\[
C^\ast\ox^\pi-:\text{dg }A\text{-bimod}\to \text{dg }k\text{-bimod}
\]
by taking
\[
C^\ast\ox^\pi M=(C^\ast\ox M, d_{C^\ast\ox M}-[\pi,-]).
\]
and $C^\ast\ox^\pi \phi=C^\ast\ox\phi$ for any dg bimodule map $\phi:M\to N$.
\end{definition}

Here we take $\pi\in C^\ast\ox A$ to be the preimage of $\pi$ along (\ref{naturalmapper}), by abuse of notation.  Note that when $(C,A)$ is sufficiently finite we have $\pi=\sum_i \pi_i^\ast\ox\pi_i$ for some elements $\pi_i^\ast\in C^\ast$ and $\pi_i\in A$.  So, even when $(C,M)$ is not locally finite, we can still consider the degree $1$ operation
\[
d_{C^\ast\ox M}-[\pi,-]=d_{C^\ast\ox M}-\sum_i[\pi^\ast_i\ox\pi_i,-]
\]
on the graded space $C^\ast\ox M$.
\par

The fact that $C^\ast\ox^\pi M$ is actually a chain complex follows from the fact that the differential $d_{\Hom^\pi_k}$ induces the given operation $d_{C^\ast\ox M}-[\pi,-]$ on $C^\ast\ox M$ by way of the embedding analogous to (\ref{naturalmapper}).  This observation also implies

\begin{proposition}\label{natmapprop}
Suppose $\pi:C\to A$ is a twisting cochain with $(C,A)$ sufficiently finite.  Then the natural chain maps
\begin{equation}\label{naturalmapper2}
C^\ast\ox^\pi M\to \Hom^\pi_k(C,M),\ \ f\ox m\mapsto (c\mapsto (-1)^{|c||m|}f(c)m)
\end{equation}
produce a natural transformation $C^\ast\ox^\pi-\to \Hom^\pi_k(C,-)$ which restricts to a natural isomorphism on the full subcategory of sufficiently finite bimodules in dg $A$-bimod.  Additionally, there is a natural transformation
\[
C^\ast\ox^\pi -\to \Hom_{A^e}(A\ox_\pi C\ox_\pi A,-),
\]
defined by composing (\ref{naturalmapper2}) with $\mathrm{res}^{-1}$, which is also an isomorphism on the subcategory of sufficiently finite dg bimodules.
\end{proposition}

\section{Presentation of the main theorem: an algebra identification $H^\bullet(\Hom^\pi_k(C,A))=HH^\bullet(A)$}
\label{cupprod}

For any twisting cochain $\pi:C\to A$ from a curved dg coalgebra to a dg algebra, we have a canonical map
\[
\varepsilon:A\ox_\pi C\ox_\pi A\to A,\ \ a\ox c\ox b\mapsto ab\epsilon_C(c)
\]
of dg bimodules.
\par

Recall that a coalgebra $C$ is called connected if it has a unique one dimensional simple subcoalgebra $C_0=k$ \cite[Definition 5.1.5]{M}.  We say a curved dg algebra $C=(C,d_C, f_C)$ is connected, or ``cocomplete", if it is connected as a coalgebra and $d_C|C_0\subset C_0$.  Compatibility of $d_C$ with the counit implies $d_C|C_0=0$.  In this case the standard coradical filtration
\[
\{F_nC:=\ker(C\overset{\Delta^{(n)}}\to C^{\ox n}\to (C/C_0)^{\ox n})\}_n
\]
will also satisfy $d_C(F_n C)\subset F_n C$.  The following definition is standard \cite[Definitions 2.2.1.1]{L}, \cite[Section 6.5]{positselski11}, \cite[Section 4.6]{K}.

\begin{definition}[Acyclic twisting cochain]
A twisting cochain $\pi:C\to A$ is called admissible if $C$ is a connected curved dg coalgebra and $\pi|C_0=0$.  A twisting cochain $\pi:C\to A$ is called acyclic if $\pi$ is admissible and the dg $A$-bimodule map
\[
\varepsilon:A\ox_\pi C\ox_\pi A\to A
\]
is a quasi-isomorphism.
\end{definition}

Our main example of an acyclic twisting cochain will be the canonical twisting cochain $\pi:(B^!)^\ast\to B$ associated to any filtered Koszul algebra $B$.  In this case the map $\varepsilon:B\ox_\pi (B^!)^\ast\ox_\pi B\to B$ will be the standard Koszul resolution (see Section \ref{kos}).  Before continuing we need to give a definition.

\begin{definition}[Hochschild cohomology of a dg algebra]
Let $A$ be any dg-algebra.  Then we take
\[
HH^\bullet(A,M)=H^\bullet(\RHom_{A^e}(A,M))\text{ and }HH^\bullet(A)=HH^\bullet(A,A).
\]
\end{definition}

For the uninitiated reader it may not even clear that we can derive the functor $\Hom_{A^e}(A,-)$ in general.  We will give a more explicit definition in Section \ref{rHH}, where more information on derived categories of dg modules and derived functors will be given.  For the time being, we give complete proofs of the following theorems only in the ring theoretic settings, i.e. the setting in which $A$ is concentrated in degree $0$ and $C$ is bounded above.  The completed proofs are given in Section \ref{rHH}.
\par

We begin with our main theorem about the Hochschild cohomology ring $HH^\bullet(A)$, then consider the general cohomologies $HH^\bullet(A,M)$.

\begin{theorem}\label{thm1}
Let $\pi:C\to A$ be any twisting cochain and take $K=A\ox_\pi C\ox_\pi A$.  The map $l:\Hom^\pi_k(C,A)\to \Hom_{A^e}(K,K)$ defined by
\begin{equation}\label{937}
f\mapsto \left(a\ox c\ox b\mapsto (-1)^{|f|(|a|+|c_1|)}a\ox c_1\ox f(c_2)b\right)
\end{equation}
is a map of dg algebras.  Furthermore, if $\pi$ is acyclic then the map $l$ is a quasi-isomorphism and we have an identification of graded rings $HH^\bullet(A)=H^\bullet(\Hom^\pi_k(C,A))$.
\end{theorem}

\begin{proof}[Proof in ring theoretic setting]
Take $K=A\ox_\pi C\ox_\pi A$.  Note that for any dg algebra $\Pi$, left dg module $M$, and generic elements $\sigma\in\Pi$ and $m\in M$, the formula $d_M(\sigma m)=d_\Pi(\sigma)m+(-1)^{|\sigma|}\sigma d_M(m)$ is exactly the statement that the left multiplication map $l_\Pi:\Pi\to \Hom_k(M,M)$ is a chain map.  Associativity of the action tells us that the left multiplication map is also an algebra, and hence dg algebra, map.  Therefore we get a dg algebra map
\[
l_{\Hom^\pi_k}:\Hom^\pi_k(C,A)\to \Hom_k(K,K)
\]
given by the formula (\ref{937}), since $K=A\ox_\pi C\ox_\pi A=(A\ox C\ox A)^\pi$ is a left $\Hom^\pi_k(C,A)$-module under the action given in Lemmas \ref{lem613} and \ref{lem627}.  We simply note that each map $l_{\Hom^\pi_k}(f)$ is left and right $A$-linear to see that the image of $l_{\Hom^\pi_k}$ lay in the dg subalgebra $\Hom_{A^e}(K,K)\subset \Hom_k(K,K)$.  This produces $l$ as the dg algebra map given by restricting the codomain of $l_{\Hom^\pi_k}$.
\par

In the case that $\pi$ is acyclic, $C$ is bounded above, and $A$ is concentrated in degree $0$, the complex $K$ provides a free bimodule resolution of $A$.  Whence the functor $\Hom_{A^e}(K,-)$ preserves quasi-isomorphisms.  In particular, the map
\[
\varepsilon_\ast:\Hom_{A^e}(K,K)\to \Hom_{A^e}(K,A)\underset{\mathrm{res}}\cong \Hom^\pi_k(C,A)
\]
will be a quasi-isomorphism.  (Recall that the restriction map is a chain isomorphism, by Proposition \ref{prop731}.)  Since $\epsilon(c_1)f(c_2)=f(\epsilon(c_1)c_2)=f(c)$ for each $c\in C$ we see that $\varepsilon_\ast l=id_{\Hom^\pi_k(C,A)}$.  Since $\varepsilon_\ast$ is a quasi-isomorphism this then implies that $l$ is also a quasi-isomorphism.
\end{proof}

\begin{corollary}\label{cor}
When $(C,A)$ is sufficiently finite, and $\pi:C\to A$ is an acyclic twisting cochain, then we have an identification of graded rings
\[
HH^\bullet(A)=H^\bullet(C^\ast\ox^\pi A)=H^\bullet(C^\ast\ox A,d_{C^\ast\ox A}-[\pi,-]).
\]
\end{corollary}

\begin{proof}
This is an immediate consequence of the fact that the isomorphism $C^\ast\ox^\pi A\cong \Hom^\pi_k(C,A)$ of Proposition \ref{natmapprop} is a dg algebra isomorphism.
\end{proof}

Let us now take a step back to consider the cohomologies $HH^\bullet(A,M)$.

\begin{theorem}\label{thmotherstuff}
If $\pi:C\to A$ is an acyclic twisting cochain then
\begin{enumerate}
\item for any dg bimodule $M$ we have $HH^\bullet(A,M)=H^\bullet\big(\Hom^\pi_k(C,M)\big)$.
\item We have an $H^\bullet(A)$-bimodule identification
\[
HH^\bullet(A,A^e)=H^\bullet\big(\Hom^\pi_k(C,A^e)\big),
\]
where the $A$-bimodule structure on functions in $\Hom^\pi_k(C,A^e)$ is induced by the inner bimodule structure on $A^e=A\ox A$.
\item If $(C,M)$ is sufficiently finite then $HH^\bullet(A,M)=H^\bullet(C^\ast\ox^\pi M)$ and if $(C,A^e)$ is sufficiently finite then the identification $HH^\bullet(A,A^e)=HH^\bullet(C^\ast\ox^\pi A^e)$ is one of $H^\bullet(A)$-bimodule.
\end{enumerate}
\end{theorem}

\begin{proof}[Proof in ring theoretic setting]
Take $K=A\ox_\pi C\ox_\pi A$.  Recall that, according to Proposition \ref{prop731}, the restriction map $\mathrm{res}_{A^e}:\Hom_{A^e}(K,M)\to \Hom^\pi_k(C,M)$ is an isomorphism of chain complexes for each $M$.  For (1) we need only know that $K=A\ox_\pi C\ox_\pi A$ is such that $\Hom_{A^e}(K,-)=\RHom_{A^e}(A,-)$.  As was explained in the partial proof to \ref{thm1}, this is clear when $A$ is concentrated in degree $0$ and $C$ is bounded above.  For (2) we simply note that the restriction map is an isomorphism of bimodules.  Part (3) is an immediate consequence of Proposition \ref{natmapprop} and the fact that the isomorphism (\ref{naturalmapper2}) is, again, an isomorphism of $A$-bimodules when $M=A^e$.
\end{proof}

\begin{remark}
Let $C^\bullet(A)=\Hom^{\mathrm{univ}}_k(\mathscr{B}A,A)$ denote the Hochschild cochain complex for $A$.  Theorem \ref{thm1} can alternately be proved by showing that the quasi-isomorphism $C^\bullet(A)\to \Hom_{A^e}(K,A)$ dual to the canonical embedding of $K$ into the bar resolution for $A$ (see \cite[Proposition 3.9]{Pr}) maps the cup product of elements in $C^\bullet(A)$ to the convolution product of their images.  This will be a more appropriate proof if one wishes to address the cup product on Hochschild cohomology with coefficients in some ring extension $A'$ of $A$.
\label{rmrkr}
\end{remark}
\begin{remark}
Since we already have a quasi-isomorphism at the level of dg algebras in Theorem~\ref{thm1}, one can verify that the identification $HH^\bullet(A)=H^\bullet(\Hom^\pi_k(C,A))$ is in fact one of $A_\infty$-algebras.
\end{remark}

\section{Hochschild cohomology of dg algebras: proofs of main theorems}
\label{rHH}

Here we give an an overview of some of the definitions and results from Barthel, May, and Riehl's paper \cite{barthelmayriehl}, and complete the proofs of Theorems \ref{thm1} and \ref{thmotherstuff}.  The paper \cite{barthelmayriehl} is concerned with analyzing a number of model structures on categories of dg modules.  We will, however, avoid discussing model categories at length.  Let us only say that since we are over a field, the (unbounded) derived category of $k$ is equal to the (unbounded) homotopy category of $k$.  This implies that the $q$-model structure and $r$-model structure from \cite{barthelmayriehl} are actually the same.  So we can use the authors results for the $r$-model structure to address the standard derived category of a dg algebra $\Pi$,
\[
D(\Pi)=\mathrm{Ho}(\mathrm{dg\ }\Pi\text{-mod})=\mathrm{dg}\ \Pi\text{-mod}[Quasi\text{-}isom^{-1}].
\]
\par

We fix a dg algebra $\Pi$ for the moment.

\begin{definition}[Semi-projective dg modules]
A dg $\Pi$-module $M$ is called semi-projective ($q$-semi-projective in \cite{barthelmayriehl}) if $M$ is projective as a $\Pi$-module, after forgetting the differential, and the hom complex $\Hom_\Pi(M,N)$ is acyclic whenever $N$ is acyclic.
\end{definition}

Since the construction of the mapping cone commutes with the hom complex functor, we see that $M$ is semi-projective if and only if it is projective as a (non-dg) $\Pi$-module and $\Hom_\Pi(M,-)$ preserves quasi-isomorphism.  So, in this case, $\Hom_\Pi(M,-)$ induces a functor on the localizations $D(\Pi)\to D(k)$.

\begin{definition}[Split filtrations]
A split filtration of a dg $\Pi$-module $M$ is a filtration $M=\cup_i F_i M$ with each $F_i M$ a dg submodule, $F_{-1}M=0$, and each quotient $F_iM/F_{i-1}M$ isomorphic to a dg bimodule of the form $\Pi\ox E$ for some $k$-complex $E$.
\end{definition}

We are principally interested in the following result.

\begin{proposition}[{\cite[Propositions 9.19]{barthelmayriehl}}]\label{prop1034}
Let $M$ be a dg $\Pi$-module.  If $M$ admits a split-filtration then $M$ is semi-projective.
\end{proposition}

We now turn our attention back to dg bimodules over a dg algebra $A$, in which case $\Pi=A^{e}$.

\begin{proposition}\label{prop1044}
If $\pi:C\to A$ is an admissible twisting cochain then the dg bimodule $A\ox_\pi C\ox_\pi A$ is semi-projective over $A^e$.
\end{proposition}

This result was inspired by \cite[Proposition 10.18]{barthelmayriehl}, which initiated my interest in the paper \cite{barthelmayriehl}.  The proof is also rather similar.

\begin{proof}
We filter by the coradical filtration on $C$.  More specifically, we take
\[
F_i=F_i(A\ox_\pi C\ox_\pi A):=A\ox F_i C\ox A.
\]
Since the differential on $C$ is filtered, as is the comultiplication, the differential on the twisted tensor complex does respect this filtration.  In fact, from the formula
\[
\ba{l}
d_{A\ox_\pi C\ox_\pi A} = d_{A\ox C\ox A}-[\pi,-]\\
= d_{A\ox C\ox A}+\big(\mu(id_A\ox\pi)\ox id_C\ox id_A-id_A\ox id_C\ox\mu(\pi\ox id)\big)\left(id_A\ox\Delta\ox id_A\right)
\ea
\]
for the differential on the twisted tensor product, and the fact that $\pi|C_0=0$, we see that the portion $[\pi,-]$ of the differential vanishes in the associated graded complex.  So we have
\[
F_i/F_{i-1}\cong A\ox (F_i C/F_{i-1} C)\ox A
\]
as an $A$-bimodule, where the differential is the product differential.  Whence, by Proposition \ref{prop1034} the complex is semi-projective.
\end{proof}

We will call a map $p:\tilde{M}\to M$ of dg $\Pi$-modules a semi-projective approximation of $M$ if $\tilde{M}$ is semi-projective and $p$ is a surjective quasi-isomorphism.  This notions correspond $p$ being an acyclic fibration.

\begin{lemma}\label{lem1083}
Let $\pi:C\to A$ be a twisting cochain.  If $\pi$ is acyclic then $\varepsilon:A\ox_\pi C\ox_\pi A\to A$ is a semi-projective approximation.
\end{lemma}

\begin{proof}
In light of the previous Proposition, we need only show that the map $\varepsilon$ is surjective.  However, this is clear since we have the $k$-section
\[
A\to A\ox A\cong A\ox C_0\ox A\to A\ox C\ox A,\ \ a\mapsto a\ox 1\ox 1.
\]
\end{proof}

There is now an obvious definition of the Hochschild cohomology, at least from the perspective of homological algebra and derived functors.

\begin{redefinition}[Hochschild cohomology of a dg algebra]\label{redefiner}
Let $A$ be a dg algebra.  We define the Hochschild cohomology $HH^\bullet(A)$ as the cohomology of the hom complex $\Hom_{A^e}(\tilde{A},\tilde{A})$, where $\tilde{A}\to A$ is a semi-projective approximation of $A$ over $A^e$.
\end{redefinition}

Obviously each of these hom complexes $\Hom_{A^e}(\tilde{A},\tilde{A})=\End_{A^e}(\tilde{A})$ will be a dg algebra.  So the Hochschild cohomology still admits a natural product.  This graded ring structure is well defined and independent of choice of resolution.
\par

We now complete the proof of Theorem \ref{thm1}.

\begin{proof}[Completed proof of Theorem \ref{thm1}]
By Lemma \ref{lem1083} $\varepsilon:K\to A$ will be a semi-projective approximation of $A$.  Thus the induced map
\[
\varepsilon_\ast:\Hom_{A^e}(K,K)\to \Hom_{A^e}(K,A)\cong\Hom_k^\pi(C,A)
\]
will be a quasi-isomorphism, and we can simply repeat the latter half of the proof of Theorem \ref{thm1} to get the desired result.
\end{proof}

We can also complete the proof of Theorem \ref{thmotherstuff}.

\begin{proof}[Completed proof of Theorem \ref{thmotherstuff}]
We simply note that $K$ is a semi-projective approximation, by Lemma \ref{lem1083}, so that $\RHom_{A^e}(A,-)=\Hom_{A^e}(K,-)\cong \Hom^\pi_k(C,-)$.  
\end{proof}

In closing, let us say a few words about the Hochschild cohomology of a dg algebra.  For any dg algebra $A$ we will always have the bar dg coalgebra $\mathscr{B}A$ and universal twisting cochain $\mathrm{univ}:\mathscr{B}A\to A$.  We then get the standard map
\[
\varepsilon:\mathscr{B}ar A=A\ox_{\mathrm{univ}}\mathscr{B}A\ox_{\mathrm{univ}}A\to A,
\]
which is a quasi-isomorphism since the mapping cone has a canonical contracting homotopy.  Then we get, by Theorem \ref{thm1}, that
\[
HH^\bullet(A)=H^\bullet(\Hom^{\mathrm{univ}}_k(\mathscr{B}A,A)).
\]
But $\Hom^{\mathrm{univ}}_k(\mathscr{B}A,A)$ is the standard Hochschild cochain complex.  So our derived functor version of the Hochschild cohomology is the same as the deformation theoretic Hochschild cohomology.  In particular, $\Sigma HH^{\bullet}(A)$ admits a graded Lie structure under which the solutions to the Maurer-Cartan equation correspond to infinitesimal deformations of $A$ (see, for example, \cite[Section 6.3]{KSdeformation}).

\section{Filtered Koszul rings and Koszul duals (following Positselski)}
\label{sec:koszul}

For the remainder of the paper $B$ will denote a (non-dg) algebra.  Internal gradings will be denoted with subscripts.  We reserve the superscript notation for (co)homological gradings.
 
\subsection{Graded Koszul duality with signs}
\label{kos0}

A Koszul algebra is a finitely generated connected graded algebra $B$, i.e.\ a graded algebra of the form
\[
B=k\oplus B_1\oplus B_2\oplus\cdots,
\]
such that $\Ext_B(k,k)$ is generated by $\Ext_B^1(k,k)$ as an algebra.  Here $k={_Bk}$ denotes the graded simple module $B/(B_{\geq 1})$.  The \emph{Koszul dual} of a Koszul algebra $B$ is the algebra $\Ext_B(k,k)$.  To avoid confusion with the filtered case, we denote the Koszul dual by $E$ for the moment.
\par

Any Koszul algebra will have a quadratic presentation $B=k\langle V\rangle/(R)$.  Let us fix a Koszul algebra with such a presentation.  Here $R\subset V\ox V$ is the subspace of quadratic relations for $B$.  It is well known that we have a presentation $E\cong k\langle V^\ast\rangle/(R^\perp)$.  We give here a description of the Koszul dual which takes into account the homological grading on the implicit Koszul resolution of $k$, which gives rise to the Koszul dual.
\par

We let $T\langle V\rangle=\oplus_{n\geq 0}V^{\ox n}$ denote the tensor coalgebra on $V$.  Recall that the comultiplication on $T\langle V\rangle$ is defined by ``separation of tensors"
\[
\mathbf{v}=(v_{1}\ox\dots\ox v_{n})\mapsto (1)\ox(\mathbf{v})+(\mathbf{v})\ox(1)+ \sum_{1\leq j\leq n-1} (v_{1}\ox\dots\ox v_j)\ox(v_{j+1}\ox\dots\ox v_{n}).
\] 
We consider $T\langle V\rangle$ to be homologically graded by taking $V$ to be in degree $-1$.  The following lemma is well known.  See for example \cite[Section 4.7]{K}, \cite[Sections 3.1.3--3.2.2]{LV}.

\begin{lemma}\label{intcoalg}
The graded subspace $W$ of $T\langle V\rangle$ defined by $W^0=k$, $W^{-1}=V$, and 
\begin{equation}
W^{-i}=\bigcap_{i_1+i_2=i-2} V^{\ox i_1}\ox R\ox V^{\ox i_2}
\label{int}
\end{equation}
for all $i\geq 2$, is a graded subcoalgebra of $T\langle V\rangle$.
\end{lemma}

It is a standard fact that there is an algebra isomorphism $E\cong W^\ast$, where $W^\ast$ is given the unsigned product $(f\star g)(c):=f(c_1)g(c_2)$.  For our purposes however, we will need an identification $E=W^\ast$ which employs our signed product on $W^\ast$.
\par

Consider $k\langle V^\ast\rangle$, the free algebra on the degree $1$ space $V^\ast$.  We have the canonical algebra isomorphism
\begin{equation}\label{free}
\ba{l}
k\langle V^\ast\rangle\to (T\langle V\rangle)^\ast\\
f_1\ox\dots\ox f_n\mapsto (v_1\ox\dots \ox v_n\mapsto (-1)^{n(n-1)/2}f_1(v_1)\dots f_n(v_n)).
\ea
\end{equation}
Here the $f_i$ are in $V^\ast$, the $v_i$ are in $V$, the function $f_1\ox\dots\ox f_n$ will vanish off $V^{\ox n}$, and the exponent $n(n-1)/2=\sum_{l=0}^{n-1}l$ comes from commuting the degree $-1$ variables $v_i$ past the degree $1$ maps $f_i$.  If we then compose the isomorphism (\ref{free}) with the dual of the inclusion $W\to T\langle V\rangle$ we get an algebra map $k\langle V^\ast\rangle\to W^\ast$.  The kernel of this map obviously contains the ideal generated by $R^\perp$, and it follows by the standard identification $E\cong W^\ast$ that the induced map $k\langle V^\ast\rangle/(R^\perp)=E\to W^\ast$ is an isomorphism.  This isomorphism simply sends a monomial $f_1\dots f_n$ in $E$ to the function
\[
f_1\dots f_n:W\to k,\ v_1\ox\dots \ox v_n\mapsto (-1)^{n(n-1)/2}f_1(v_1)\dots f_n(v_n).
\]
It is via this particular isomorphism that we identify $E$ with $W^\ast$ as a graded algebra.

\begin{remark}\label{rmrk0}
The sign conventions we employ here make no difference in the presentation of the Koszul dual as a graded ring $E=k\langle V^\ast\rangle/(R^\perp)=k\langle V^\ast\rangle/(\sum_{i,j}c^{ij}_\ell f_i\ox f_j)_\ell$.  The conventions do make a difference once we start considering differentials and curvature.
\end{remark}

\subsection{Filtered Koszul algebras and their Koszul dual (curved) dg algebras}
\label{afiltkos}

The class of algebras we will be interested in are the following.

\begin{definition}[Filtered Koszul algebras]
A $\mathbb{Z}_{\geq 0}$-filtered algebra $B=\cup_{i\geq 0} F_i B$ such that $\gr B$ is Koszul is called a filtered Koszul algebra.
\end{definition}

Let $B$ be a filtered Koszul algebra, and $\gr B=k\langle V\rangle/(R)$ be its (graded) Koszul associated graded algebra, with $R$ the space of degree $2$ relations.  One can check then that we will have a presentation 
\begin{equation}
\label{presenting}
B=k\langle V\rangle/(r+\alpha_1(r)+\alpha_0(r))_{r\in R}.
\end{equation}
for some linear functions $\alpha_1:R\to V$ and $\alpha_0:R\to k$.  These functions are not determined uniquely by $B$, but depend (uniquely) on a choice of section $V=F_1A/k\to F_1 A$ (See \cite[\S 2]{P}).
\par

Let us take $E$ to be the algebra of extensions $\Ext_{\mathrm{gr}B}(k,k)$ for the associated graded algebra $\gr B$, which we have assumed to be Koszul.  Recall that the algebra $E$ is given as the dual of the graded coalgebra $W=\cdots\oplus R\oplus V\oplus k$ of Lemma \ref{intcoalg}.  In \cite{P}, Positselski proves the following

\begin{proposition}[{\cite[Proposition 2.2]{P}}]\label{posprop}
Suppose $B$ is filtered Koszul with a presentation as in (\ref{presenting}).
\begin{enumerate}
\item The function $V^\ast=E^1\to R^\ast=E^2$ given by precomposition with $\alpha_1$, $f\mapsto f\alpha_1$, extends to a well defined graded degree $1$ derivation $d^B$ on $E$.
\item If we take
\[
c^B=-\alpha_0\in R^\ast=E^2,
\]
then the triple $(E,d^B,c^B)$ defines a curved dg algebra structure on the algebra of extensions $E=\Ext_{\gr B}(k,k)$ of the Koszul algebra $\gr B$.
\end{enumerate}
\end{proposition}

\begin{proof}
The proof is the same as in \cite{P}.  We only note here that the sign on the curvature has changed due to our signed identification with $W^\ast$ (see Remark \ref{rmrk0}).
\end{proof}

It is this structure which we view as the Koszul dual of $B$.  Here we could take $B$ to be a Weyl algebra or Clifford algebra, for example.

\begin{definition}[The Koszul dual]
Let $B$ be a filtered Koszul algebra.  The curved dg algebra $(\Ext_{\gr B}(k,k), d^B, c^B)$ described above will be called the Koszul dual curved dg algebra to $B$.  It will generally be denoted $B^!=(B^!,d_{B^!},c_{B^!})$.
\end{definition}

Note that when $\alpha_0=0$ in the presentation (\ref{presenting}), the algebra $B$ is augmented, the curvature $c_{B^!}$ vanishes, and the Koszul dual of $B$ is a non-curved dg algebra.  In this augmented case, Proposition \ref{posprop} already appears in Priddy's original work on Koszul resolutions \cite[Theorem 4.3]{Pr}.  In the case that $B$ is the universal enveloping algebra of a Lie algebra $\mathfrak{g}$, for example, the dg algebra $(E, d^B)$ is the Chevalley-Eilenberg dg algebra of $\mathfrak{g}$, $(E,d^B)=(\bigwedge^\bullet\mathfrak{g}^\ast,d_{CE})$.

\subsection{Example: The $n$th Weyl algebra}

In the case of the $n$th Weyl algebra
\[
A_n(k)=k\langle x_1,\dots, x_n,{\small\frac{\partial}{\partial x_1}},\dots,{\small\frac{\partial}{\partial x_n}}\rangle/([x_j,x_i],[\frac{\partial}{\partial x_j},\frac{\partial}{\partial x_i}],[{\small\frac{\partial}{\partial x_j}},x_i]-\delta_{ji})_{ij},
\]
we have $\gr A=k[x_1,\dots, x_n,\frac{\partial}{\partial x_1},\dots,\frac{\partial}{\partial x_n}]$ and 
\[
\Ext_{\gr A}(k,k)=k[\lambda_1,\dots,\lambda_n,\theta_1,\dots,\theta_n]=:k[\Lambda,\Theta].
\]
In the second algebra, the variables $\lambda_i$ and $\theta_j$ are the duals of the $x_i$ and $\frac{\partial}{\partial x_j}$ respectively.  We consider these functions to have homological degree $1$, and the algebra $k[\Lambda,\Theta]$ is the free {\it graded} commutative algebra with these generators.  (So the variables anti-commute.)
\par
Recall our identification of $\Ext_{\gr A}^2(k,k)=(V^\ast\ox V^\ast)/R^\perp$ with $R^\ast$ is given by sending a monomial $f_1f_2$ to the function $\sum_i r_i\ox r'_i\mapsto -\sum f_1(r_i)f_2(r_i')$.  So $\lambda_i\theta_j$ gets identified with the function $R\to k$ defined on basis elements by
\[
\lambda_i\theta_j: \ba{l} x_k\ox x_l-x_l\ox x_k\mapsto 0\\
\frac{\partial}{\partial x_k}\ox x_l-x_l\ox \frac{\partial}{\partial x_k}\mapsto \delta_{il}\delta_{jk}\\
\frac{\partial}{\partial x_k}\ox \frac{\partial}{\partial x_l}- \frac{\partial}{\partial x_l}\ox \frac{\partial}{\partial x_k}\mapsto 0\ea
\]
Whence, in this case, the curvature element $c^{A_n(k)}=-\alpha_0$ in $k[\Lambda,\Theta]^2$ will be the sum
\[
c^{A_n(k)}=\sum_{i=1}^n\lambda_i\theta_i.
\]
The corresponding Koszul dual of $A_n(k)$ will be the curved dg algebra
\[
(k[\lambda_1,\dots,\lambda_n,\theta_1,\dots,\theta_n], 0, c^{A_n(k)}).
\]

\subsection{Example: PBW deformations of skew polynomial rings}
\label{skewp}

Take $V=\langle x_1,\dots,x_n\rangle$ and let $S_Q(V)$ denote the skew polynomial ring
\[
S_Q(V)=k\langle x_1,\dots, x_n\rangle/(x_jx_k-q_{jk}x_kx_j)
\]
for $Q=[q_{jk}]$ a multiplicatively skew symmetric matrix ($q_{jk}=q_{kj}^{-1}$) with $q_{jj}=1$.  The Koszul dual of the skew polynomial ring in the skew exterior algebra
\[
\Ext_{S_Q(V)}(k,k)={\bigwedge}_QV^\ast:=k\langle \lambda_1,\dots,\lambda_n\rangle/
(\lambda_k\lambda_j+q_{jk}\lambda_j\lambda_k, \lambda_j^2).
\]
An augmented PBW deformation $B$ of $S_Q(V)$ will be given by some constants $c_i^{jk}$ so that the relations on our PBW deformation $B$ will be given by
\[
B=k\langle x_1,\dots,x_n\rangle/(x_jx_k-q_{jk}x_kx_j-\sum_i c^{jk}_i x_i).
\]
So $\alpha_1:R\to V$ will be the function $x_j\ox x_k-q_{jk}x_k\ox x_j\mapsto -\sum_i c_i^{jk}x_i$.
\par
Now, on the Koszul dual, the product $\lambda_i\lambda_j$ in $({\bigwedge}_QV^\ast)^2$ is identified with the function
\[
x_l\ox x_m -q_{lm}x_m\ox x_l\mapsto-(\lambda_i(x_l)\lambda_j(x_m)-q_{lm}\lambda_i(x_m)\lambda_j(x_l))=-\delta_{il}\delta_{jm}+q_{lm}\delta_{im}\delta_{jl},
\]
i.e.\ the negated dual of the relations $x_ix_j-q_{ij}x_jx_i$.  So
\[
\ba{rl}
d_{B^!}(\lambda_i)&=\lambda_i\alpha_1\\
&=(x_j\ox x_k-q_{jk}x_k\ox x_j\mapsto -\sum_l c_l^{jk}\lambda_i(x_l))\\
&=(x_j\ox x_k-q_{jk}x_k\ox x_j\mapsto -c_i^{jk})\\
&=\sum_{j<k} c_i^{jk}\lambda_j\lambda_k.
\ea
\]
We will come back to this example in the next section.

\section{Hochschild cohomology via $B^!$ for Koszul rings}
\label{kos}

\subsection{Koszul resolutions via twisting cochains} In this section we give a presentation of Koszul resolutions based on the work of Keller and Lef\'{e}vre-Hasegawa.  The original presentation, in the case that $B$ is graded Koszul, appears in \cite[Section 4.7]{K} and \cite{K1}.
\par

For the section we fix $B$ to be a filtered Koszul algebra with a presentation
\[
B=k\langle V\rangle/(r+\alpha_1(r)+\alpha_0(r))_{r\in R}
\]
and $B^!=(B^!,d_{B^!},c_{B^!})$ its Koszul dual (curved) dg algebra of Proposition \ref{posprop}.  Let $\gr B=k\langle V\rangle/(R)$ so that $B^!$ is dual to the graded coalgebra $W=\cdots\oplus (R\ox V\cap V\ox R)\oplus R\oplus V\oplus k$ of Lemma \ref{intcoalg}.  Positselski's proof of \cite[Proposition 2.2]{P}, in particular equation (2) of the proof given therein, implies
\par

\begin{lemma}[\cite{P}]\label{curveddgc}
There is a unique curved dg structure on $W$ given by $f_W=\alpha_0$ and $d_W|W^{-2}=\alpha_1$ such that the identification $B^!=W^\ast$ is one of curved dg algebras.
\end{lemma}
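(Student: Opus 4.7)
The plan is to imitate the proof of Positselski's Proposition 2.2 in \cite{P}. For $n\geq 2$ and $i\in\{1,\dots,n-1\}$, the intersection description (\ref{int}) lets me write each $c\in C^{-n}$ as $c=\sum_l v_{l,1}\ox\dots\ox r_{l,i}\ox\dots\ox v_{l,n}$ with $r_{l,i}\in R$ occupying positions $(i,i+1)$. Define the ``partial derivative'' $d_C^{(i)}(c):=\sum_l v_{l,1}\ox\dots\ox\alpha_1(r_{l,i})\ox\dots\ox v_{l,n}$; this is well-defined because $V^{\ox(i-1)}\ox R\ox V^{\ox(n-i-1)}$ embeds injectively in $V^{\ox n}$. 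Then set
$$
d_C(c):=\sum_{i=1}^{n-1}(-1)^{i-1}d_C^{(i)}(c),
$$
with the sign coming from moving the degree $1$ operation past $i-1$ factors of degree $-1$. The function $f_C$ is forced by $|f_C|=2$ to vanish off $C^{-2}$, and I declare $f_C|_{C^{-2}}:=\alpha_0$.

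The main step is to verify that $d_C$ actually lands in $C$, i.e.\ $d_C(C^{-n})\subset C^{-n+1}$. For $j\in\{0,\dots,n-3\}$ one must show that $d_C(c)$ lies in $V^{\ox j}\ox R\ox V^{\ox(n-3-j)}$. When this $R$-slot is disjoint from the collapsed $(i,i+1)$ pair it is automatic; when it overlaps, the assertion becomes a compatibility between $\alpha_1$ and $R$ which is precisely the Braverman--Gaitsgory / Polishchuk--Positselski PBW condition, guaranteed by our hypothesis that $A$ is a filtered algebra deforming $\gr A=k\langle V\rangle/(R)$. The coderivation property $\Delta d_C=(d_C\ox id+id\ox d_C)\Delta$ is then a direct combinatorial check against the deconcatenation coproduct of $T\langle V\rangle$.

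The curved dg axioms $d_C^2=(f_C\ox id-id\ox f_C)\Delta$ and $f_Cd_C=0$ reduce to identities living in $C^{-3}\subset V^{\ox 3}$: since $f_C$ is nonzero only in tensor degree $2$, the right-hand side of the curvature equation picks up only the $\Delta(c)_{2,n-2}$ and $\Delta(c)_{n-2,2}$ components of the iterated coproduct, and the resulting relations are precisely the cubic identities on $(\alpha_0,\alpha_1)$ already established in \cite[Proposition 2.2]{P}. I expect the main obstacle to be not the core algebraic content but the sign bookkeeping, which is complicated both by the homological grading on $C$ and by the signed pairing (\ref{free}) used to identify $\Lambda$ with $C^\ast$.

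Dualizing to finish: since $C$ is locally finite, Section \ref{dg} endows $C^\ast$ with a curved dg algebra structure whose differential is $\eta\mapsto-(-1)^{|\eta|}\eta d_C$ and whose curvature is $-ev_{f_C}$. On the generators $V^\ast=(C^{-1})^\ast$ this differential restricts to $f\mapsto f\alpha_1$, matching Positselski's $d^A$; and under the signed identification $E^2\cong R^\ast$ the curvature becomes $-\alpha_0=c^A$, the additional sign being exactly the one flagged in Remark \ref{rmrk0}. Hence the identification $\Lambda=C^\ast$ respects all of the curved dg structure, completing the proof.
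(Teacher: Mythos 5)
Your argument is correct, but it runs in the opposite direction from the paper's. You build the curved dg coalgebra structure on $C$ from the bottom up: you write $d_C$ explicitly as a signed sum of partial applications of $\alpha_1$, check that it preserves the intersection subcoalgebra, verify the coderivation identity, and reduce the curved dg axioms to the cubic identities on $(\alpha_0,\alpha_1)$ from \cite{P}, dualizing only at the end. The paper instead takes the hard analytic content as already established: Proposition \ref{posprop} says $(\Lambda,d^A,c^A)$ is a curved dg algebra, so the locally finite dual $\Lambda^\ast$ is a curved dg coalgebra, and the graded coalgebra isomorphism $ev\colon C\to{C^\ast}^\ast=\Lambda^\ast$ transports that structure to $C$; all that remains is the one-line computation $d_{C^\ast}(f)=-(-1)^{|f|}fd_C=f\alpha_1$ on generators, which shows the dual structure coincides with $(\Lambda,d^A,c^A)$. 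Your route is more self-contained and exhibits where the structure actually lives --- it is essentially Positselski's own construction, to which the paper's proof of Proposition \ref{posprop} also defers --- while the paper's route is shorter because it piggybacks on the cited result and pushes all sign bookkeeping into a single duality computation. Two small repairs to your write-up: in the closure step it is the signed combination of the \emph{two adjacent} partial derivatives, not either one alone, that lands in $R$ (this is exactly the condition $(\alpha_1\ox id-id\ox\alpha_1)(R\ox V\cap V\ox R)\subset R$), so the phrasing ``when it overlaps'' should be made precise; and the curvature of the dual algebra is $-f_C$, which is already an element of $C^\ast$, rather than ``$-ev_{f_C}$'' --- with the signed identification of $E^2$ with $R^\ast$ this is indeed $c^A=-\alpha_0$, as you conclude.
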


\begin{proof}
The proof of \cite[Proposition 2.2]{P} shows that such a curved dg coalgebra structure on $W$ exists.  The fact that $B^!=W^\ast$ as curved dg algebras is immediate from the definition of the curved dg algebra structure on the dual of a curved dg coalgebra given in Section \ref{sec:curved_stuffs}.
\end{proof}

\begin{definition}[Koszul dual coalgebra]
Given a filtered Koszul algebra $B$, with $\gr B= k\langle V\rangle/(R)$, the Koszul dual (curved) dg coalgebra to $B$ will be the, possibly curved, dg coalgebra $(W, d_W, f_W)$ of Lemma \ref{curveddgc}.  We will often write simply $W$ for $(W,d_W,f_W)$.
\end{definition}

\begin{lem/def}[The twisting cochain $e$]\label{lemdef}
Let $B$ be filtered Koszul and $W$ be its Koszul dual (curved) dg coalgebra.  Let $e:W\to B$ be the composition of the projection $W\to W^{-1}=V$ with the inclusion $V\to B,\ v\mapsto v$.  The map $e:W\to B$ is an acyclic twisting cochain.
\end{lem/def}

\begin{proof}
We need to verify the formula
\[
-f_W-e d_W+\mu(e\ox e)\Delta=0.
\]
It suffices to check that the above equation holds when evaluated at a homogeneous degree $-2$ element in $W$, since the left hand side vanishes on elements of all other degrees.  Recalling that $W^{-2}=R$, we evaluate on a relation $r=\sum_i r_i\ox r'_i$ to get
\[
\ba{l}
(-f_W-e d_W+\mu(e\ox e)\Delta)(r)\\
=-f_W(r)-e d_W(r)+\mu(e(r)\ox e(1)+e(1)\ox e(r)-\sum_i e(r_i)\ox e(r'_i))\\
=-\alpha_0(r)- e(\alpha_1(r))-\sum_i r_ir_i'\\
=-\alpha_0(r)-\alpha_1(r)+\alpha_1(r)+\alpha_0(r)\\
=0.
\ea
\]
\par

The fact that $W$ is connected is clear, since $W^0=k$.  Now, in the case where $B$ is graded Koszul the above complex $B\ox_e W\ox_e B$, along with the map $\varepsilon:B\ox_e W\ox_e B\to B$ is easily seen to recover the standard Koszul resolution \cite[proof of Proposition 3.3]{V}, \cite[Section 4.7]{K1}.  In general, one can employ the filtration 
\[
F_i(B\ox_e W\ox_e B)=\sum_{i_1+i_2+i_3=i} F_{i_1}B\ox W^{-i_2}\ox F_{i_3}B,
\]
which has associated graded complex equal to the Koszul resolution of $\gr B$, and an easy spectral sequence argument to see that $H^{<0}(B\ox_e W\ox_e B)=0$.  The fact that $H^{0}(B\ox_e W\ox_e B)=B$ is apparent.
\end{proof}

In following the standard terminology, we call the resolution $\varepsilon:B\ox_e W\ox_e B\overset{\sim}\to B$ the {\it Koszul resolution} of a filtered Koszul ring $B$.

\subsection{Hochschild cohomology via $B^!$}

As a consequence of Lemma \ref{lemdef}, we find that Corollary \ref{cor} specializes to the Koszul case to give

\begin{corollary}\label{thm}
For $B$ filtered Koszul with Koszul dual (curved) dg algebra $B^!$, we have an identification of ($A_\infty$-)algebras $HH^\bullet(B)=H^\bullet(B^!\ox^e B)$.
\end{corollary}

Recall that the object $B^!\ox^e B$ is the dg algebra 
\[
B^!\ox^e B=\big(B^!\ox B, d_{B^!\ox B}-[e,-]\big).
\]
If we suppose that $\gr B=k\langle V\rangle/(R)$, then the element $e$ is just the standard identity element in $(B^!\ox B)^1=V^\ast\ox V$.  Indeed, given a basis $\{x_i\}_i$ of $V$, and dual basis $\{\lambda_i\}_i$ of $V^\ast$, one can check that $e$ is the element $\sum_i \lambda_i\ox x_i$.  This element is called the ``identity element" because it is identified with the identity map under the natural isomorphism $V^\ast\ox V\overset{\cong}\to \Hom_k(V,V)$.  Whence we have
\[
HH^\bullet(B)=H^\bullet(B^!\ox^e B)=H^\bullet\big(B^!\ox B,d_{B^!\ox B}-[\sum_i \lambda_i\ox x_i,-]\big).
\]
\par

For the cohomologies $HH^\bullet(B,M)$, we have the following immediate corollary to Theorem \ref{thmotherstuff}.  The second portion of the statement provides a slight generalization of \cite[Theorem 9.1]{V2} to allow for filtered, not just graded, Koszul algebras.  One could also consider this result in relation to Yekutieli's computations of rigid dualizing complexes for universal enveloping algebras \cite{yekutieli00}.
 
\begin{corollary}\label{prop}
Given a filtered Koszul algebra $B$, and $B$ bimodule $M$ we have $H^\bullet(B^!\ox^e M)=HH^\bullet(B,M)$.  When $M={_BB\ox B_B}$ then the $H^\bullet(B^!\ox^e (B\ox B))=H^\bullet(B,B\ox B)$ as a bimodule.
\end{corollary}

\begin{remark}
It seems as though the most readily generalizable result is the identification
\[
HH^\bullet(B)=H^\bullet(\Hom^e_k(W,B)).
\]
For example, if one is interested in moving away from the (graded) Koszul case to the general case of connected graded algebras, one should replace the Koszul dual algebra $B^!$ with the Koszul dual $A_\infty$-algebra (see \cite{LZ}).  Taking the dual $B^!$ will give an $A_\infty$-coalgebra $W$ which will be connected to $B$ via an $A_\infty$-twisting cochain $e:W\to B$ (similar to the \cite[Section 4.4]{K}).  It may then be the case that we still have that the cohomology of the twisted homs $\Hom^e_k(W,B)$ is the Hochschild cohomology algebra.
\end{remark}

\bibliographystyle{abbrv}

\def\cprime{$'$}

\end{document}